\numberwithin{equation}{section}
\theoremstyle{plain}
	\newtheorem{theorem}{Theorem}[section]
	\newtheorem{proposition}[theorem]{Proposition}
	\newtheorem{lemma}[theorem]{Lemma}
	\newtheorem{corollary}[theorem]{Corollary}
\theoremstyle{definition}
	\newtheorem{remark}[theorem]{Remark}
\newcommand{\N}{\mathbb{N}}
\newcommand{\R}{\mathbb{R}}
\newcommand{\eps}{\varepsilon}
\let\altphi\phi
\let\phi\varphi
\let\varphi\altphi
\let\altphi\undefined
\DeclarePairedDelimiter{\scalar}{<}{>}                                     
\DeclarePairedDelimiter{\set}{\{}{\}}
\DeclarePairedDelimiter{\abs}{|}{|}
\newcommand{\weakstarto}{\overset*\rightharpoonup}
\let\div\undefined
\DeclareMathOperator{\div}{div}
\newcommand{\M}{\mathsf{M}}
\newcommand{\wM}{\closure[-0.5]{\M}}
\newcommand{\A}{\mathsf{A}}
\DeclareMathOperator{\Lip}{Lip}
\newcommand{\leb}{\mathscr{L}}
\def\Xint#1{\mathchoice
	{\XXint\displaystyle\textstyle{#1}}%
	{\XXint\textstyle\scriptstyle{#1}}%
	{\XXint\scriptstyle\scriptscriptstyle{#1}}%
	{\XXint\scriptscriptstyle\scriptscriptstyle{#1}}%
	\!\int}
\def\XXint#1#2#3{{\setbox0=\hbox{$#1{#2#3}{\int}$ }
		\vcenter{\hbox{$#2#3$ }}\kern-.6\wd0}}
\def\dashint{\Xint-}
\mathchardef\ordinarycolon\mathcode`\:
\newcommand{\closure}[2][3]{%
  {}\mkern#1mu\overline{\mkern-#1mu#2}}
\begin{document}

\title[A maximal function characterization]{A maximal function characterization of absolutely continuous measures and Sobolev functions}

\author[E. Bruè]{Elia Bruè}
\address{Scuola Normale Superiore, Piazza Cavalieri 7, 56126 Pisa, Italy}
\email{elia.brue@sns.it}

\author[Q. H. Nguyen]{Quoc-Hung Nguyen}
\address{Centro di Ricerca Matematica ``Ennio De Giorgi'', Piazza Cavalieri 3, 56126 Pisa, Italy}
\email{quochung.nguyen@sns.it}

\author[G. Stefani]{Giorgio Stefani}
\address{Scuola Normale Superiore, Piazza Cavalieri 7, 56126 Pisa, Italy}
\email{giorgio.stefani@sns.it}

\date{\today}

\keywords{Maximal functions, singular measures, $BV$ and Sobolev functions, regular Lagrangian flows}

\subjclass[2010]{42B25, 46E27, 26A45}

\thanks{\textit{Acknowledgements}. The authors thank Professors Luigi Ambrosio and Giovanni Alberti for useful conversations about the subject. We also thank Professor Guido De Philippis for his comments on a preliminary version of the manuscript and for having pointed us the reference~\cite{MS13}.}

\begin{abstract}
In this note, we give a new characterization of Sobolev $W^{1,1}$ functions among $BV$ functions via Hardy--Littlewood maximal function. Exploiting some ideas coming from the proof of this result, we are also able to give a new characterization of absolutely continuous measures via a weakened version of Hardy--Littlewood maximal function. Finally, we show that the approach adopted in~\cites{CDeL08,J10} to establish existence and uniqueness of regular Lagrangian flows associated to Sobolev vector fields cannot be further extended to the case of $BV$ vector fields.
\end{abstract}

\maketitle

\section{Introduction}

Let $\mu$ be a Borel measure in $\R^d$. We let
\begin{equation}\label{eq:def_max_func}
\M\mu(x):=\sup_{r>0}\frac{|\mu|(B(x,r))}{\leb^d(B(x,r))}\in[0,+\infty]
\end{equation} 
be the \emph{(Hardy--Littlewood) maximal function of $\mu$} at $x\in\R^d$, see~\cite{S70}*{Chapter 1}. If the measure $\mu$ is absolutely continuous with respect to the Lebesgue measure with density $f\in L^1_{\rm loc}(\R^d)$, then we can rewrite~\eqref{eq:def_max_func} as
\begin{equation}\label{eq:def_max_func_f}
\M f(x):=\sup_{r>0}\dashint_{B(x,r)} |f(y)|\ dy\in[0,+\infty]
\end{equation} 
for all $x\in\R^d$. It is well known that if $f\in L^p(\R^d)$ for some $1<p\le+\infty$, then the maximal function in~\eqref{eq:def_max_func_f} satisfies the following \emph{strong $(p,p)$-type estimate}
\begin{equation}\label{eq:(p,p)_estimate}
\|\M f\|_{L^p(\R^d)}\lesssim_d\|f\|_{L^p(\R^d)}.
\end{equation}
Here and in the following, given two quantities $A$ and $B$, we write $A\lesssim_d B$ (resp.~$A\gtrsim_d B$) if there exists a dimensional constant $C>0$ such that $A\le C B$ (resp.~$A\ge C B$).
If $f\in L^1(\R^d)$, then the maximal function in~\eqref{eq:def_max_func_f} satisfies the following \emph{weak $(1,1)$-type estimate}
\begin{equation}\label{eq:(1,1)_estimate}
\sup_{\lambda>0} \lambda\,\leb^d(\set*{x\in\R^d: \M f(x)>\lambda})\lesssim_d\|f\|_{L^1(\R^d)}.
\end{equation}
For a proof of inequalities~\eqref{eq:(p,p)_estimate} and~\eqref{eq:(1,1)_estimate}, we refer the interested reader to~\cite{S70}*{Theorem~1}. Actually, if $f\in L^1(\R^d)$, writing $f=f\chi_{\set*{|f|>\frac{\lambda}{2}}}+f\chi_{\set*{|f|\le\frac{\lambda}{2}}}$ and combining~\eqref{eq:(p,p)_estimate} and~\eqref{eq:(1,1)_estimate}, we can improve~\eqref{eq:(1,1)_estimate} as
\begin{equation*}
\lambda\,\leb^d(\set*{x\in\R^d: \M f(x)>\lambda})\lesssim_d\int_{|f|>\frac{\lambda}{2}}|f(y)|\ dy,
\end{equation*}
so that
\begin{equation*}
\limsup_{\lambda\to+\infty}\lambda\,\leb^d(\set*{x\in\R^d: \M f(x)>\lambda})=0.
\end{equation*}
With a similar reasoning, we also get that
\begin{equation}\label{eq:standard_limsup}
\limsup_{\lambda\to+\infty} \lambda\,\leb^d(\set{x\in\R^d : \M\mu(x)>\lambda}\lesssim_d |\mu^s|(\R^d),
\end{equation}
for all finite Borel measures~$\mu$ (see~\cite{N18}*{Section~2} for more details). Here and in the following, $\mu^s$ denotes the \emph{singular part} of the measure~$\mu$ with respect to the Lebesgue measure in~$\R^d$. 

As remarked in~\cite{MS13}*{Problem 3.5}, it is also possible to establish a reverse version of inequality~\eqref{eq:standard_limsup}, see \cref{prop:main} below.

\begin{proposition}\label{prop:main}
Let $\mu$ be a finite Borel measure in~$\R^d$. Then
\begin{equation}\label{eq:main}
\inf_{\lambda>0} \lambda\,\leb^d(\set*{x\in\R^d: \M\mu(x)>\lambda})\gtrsim_d|\mu^s|(\R^d).
\end{equation}
More in general, given a cube $Q\subset\R^d$, it holds
\begin{equation}\label{eq:main_local}
\inf\set*{\lambda\,\leb^d(\set{x\in Q: \M\mu(x)>\lambda}) : \lambda>\tfrac{|\mu|(Q)}{\leb^d(Q)}}\gtrsim_d |\mu^s|(Q).
\end{equation} 
\end{proposition}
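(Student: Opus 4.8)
The plan is to prove the local bound~\eqref{eq:main_local} and recover~\eqref{eq:main} from it by letting the cube $Q$ exhaust $\R^d$: indeed $|\mu|(Q)/\leb^d(Q)\to0$, so the constraint on $\lambda$ becomes vacuous in the limit, while $|\mu^s|(Q)\to|\mu^s|(\R^d)$ and $\set*{x\in Q:\M\mu(x)>\lambda}\subseteq\set*{x\in\R^d:\M\mu(x)>\lambda}$. So fix a cube $Q$ of side length $\ell$ and a level $\lambda>|\mu|(Q)/\leb^d(Q)$, and consider the finite measure $\nu(A):=|\mu^s|(A\cap Q)$, which is mutually singular with respect to $\leb^d$. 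By the Besicovitch differentiation theorem for mutually singular measures,
\begin{equation*}
\lim_{r\to0^+}\frac{\nu(B(x,r))}{\leb^d(B(x,r))}=+\infty\qquad\text{for $\nu$-a.e.\ }x,
\end{equation*}
so by inner regularity I would fix a compact set $K$ contained in this full $\nu$-measure set with $\nu(K)\ge\nu(\R^d)-\eps=|\mu^s|(Q)-\eps$.

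The heart of the matter is the following stopping-time construction. Fix a dimensional constant $M=M_d$, large enough that $M>3^d$ and that the radius bound below gives $2r_x\le\ell/2$; for each $x\in K$ put
\begin{equation*}
r_x:=\sup\set*{r>0:\nu(B(x,r))\ge M\lambda\,\leb^d(B(x,r))}.
\end{equation*}
Since $\nu(B(x,r))\le\nu(\R^d)$, the set in the supremum is contained in a bounded interval, so $r_x<\infty$; by the displayed divergence the set is nonempty, so $r_x>0$. As $r\mapsto\nu(B(x,r))$ is nondecreasing and left-continuous, passing to the limit along a sequence realizing the supremum yields $\nu(B(x,r_x))\ge M\lambda\,\leb^d(B(x,r_x))$, whereas $\nu(B(x,r))<M\lambda\,\leb^d(B(x,r))$ for all $r>r_x$; applying the latter at $r=2r_x$ gives the two-sided control
\begin{equation*}
M\lambda\,\leb^d(B(x,r_x))\ \le\ \nu(B(x,2r_x))\ <\ 2^dM\lambda\,\leb^d(B(x,r_x)).
\end{equation*}
Two consequences. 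First, using the lower bound and $\nu\le|\mu|$: for every $y\in B(x,2r_x)$ one has $B(x,r_x)\subseteq B(y,3r_x)$, so $|\mu|(B(y,3r_x))/\leb^d(B(y,3r_x))\ge 3^{-d}M\lambda>\lambda$, i.e.\ $B(x,2r_x)\subseteq\set*{x\in\R^d:\M\mu(x)>\lambda}$. Second, using $\nu(B(x,r_x))\le\nu(\R^d)=|\mu^s|(Q)\le|\mu|(Q)<\lambda\leb^d(Q)=\lambda\ell^d$ one gets $M\omega_d r_x^d<\ell^d$ (with $\omega_d=\leb^d(B(0,1))$), whence $2r_x\le\ell/2$ for $M_d$ suitably large, and then $\leb^d(B(x,2r_x)\cap Q)\ge 2^{-d}\leb^d(B(x,2r_x))$ for $x\in Q$ (take the spherical orthant of $B(x,2r_x)$ pointing towards the center of $Q$).

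With these facts in hand I would apply the Besicovitch covering theorem to $\set*{B(x,2r_x):x\in K}$ to extract a countable subfamily $\set*{B(x_i,2r_{x_i})}_i$ covering $K$ with overlap at most a dimensional constant $\xi_d$, and then chain the estimates
\begin{align*}
|\mu^s|(Q)-\eps\ \le\ \nu(K)&\ \le\ \sum_i\nu(B(x_i,2r_{x_i}))\ <\ 2^dM\lambda\sum_i\leb^d(B(x_i,r_{x_i}))\\
&=\ M\lambda\sum_i\leb^d(B(x_i,2r_{x_i}))\ \le\ 2^dM\lambda\sum_i\leb^d(B(x_i,2r_{x_i})\cap Q)\\
&\le\ 2^dM\xi_d\,\lambda\,\leb^d\bigl(\set*{x\in Q:\M\mu(x)>\lambda}\bigr),
\end{align*}
where the last step uses $B(x_i,2r_{x_i})\cap Q\subseteq\set*{x\in Q:\M\mu(x)>\lambda}$ together with the bounded overlap. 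Letting $\eps\to0$ and taking the infimum over admissible $\lambda$ gives~\eqref{eq:main_local} with $c_d=(2^dM_d\xi_d)^{-1}$, and hence also~\eqref{eq:main}.

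The two classical Besicovitch theorems and the orthant estimate are routine; the point that needs care — and the whole crux — is the choice of the stopping radius $r_x$. One must read off from it \emph{simultaneously} a lower bound $\nu(B(x,r_x))\gtrsim_d\lambda\,\leb^d(B(x,r_x))$, which forces a fixed dilate of the ball into the superlevel set $\set*{\M\mu>\lambda}$, and a matching upper bound on $\nu(B(x,2r_x))$, which lets one trade captured $\nu$-mass for Lebesgue measure; and both must be phrased through \emph{concentric} balls so that the Besicovitch covering theorem applies — this is precisely what routes the argument through the centered maximal function and produces the various dimensional dilation factors. The hypothesis $\lambda>|\mu|(Q)/\leb^d(Q)$ enters in an essential way, exactly to keep $r_x\lesssim_d\ell$ so that the stopping balls do not escape $Q$.
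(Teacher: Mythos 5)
Your argument is correct, and it takes a genuinely different route from the paper's. The paper reduces to a non-negative singular $\mu$, mollifies it into $f_\eps(x)=\mu(B(x,\eps))/(\omega_d\eps^d)$, proves an ``almost semigroup property'' giving $\M f_\eps\lesssim_d\M\mu$, and then feeds $f_\eps$ into the classical reverse weak $(1,1)$-type inequality $t\,\leb^d(\set{\M f>Ct})\gtrsim_d\int_{\set{f>t}}f\,d\leb^d$ (proved via the Calder\'on--Zygmund decomposition); the singular part is finally recovered through Fatou's lemma and the fact that $f_\eps\to+\infty$ $\mu$-a.e. You instead work directly on the singular part with a stopping-time radius $r_x$ calibrated so that $\nu(B(x,r_x))\ge M\lambda\,\leb^d(B(x,r_x))$ while $\nu(B(x,2r_x))<2^dM\lambda\,\leb^d(B(x,r_x))$, and close the estimate with the Besicovitch covering theorem; the only differentiation-theoretic input is the same $\nu$-a.e.\ blow-up of the density that the paper also uses. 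Your version is self-contained modulo Besicovitch, proves the local inequality \eqref{eq:main_local} directly (the paper leaves that case to the reader), and makes transparent why the threshold $\lambda>|\mu|(Q)/\leb^d(Q)$ is needed, namely to force $r_x\lesssim_d\ell$ so that the stopping balls stay essentially inside $Q$ --- exactly the role played by $t>\dashint_Qf\,d\leb^d$ in the paper's Lemma \ref{lemma:CZ}. The paper's route, on the other hand, is shorter once the reverse weak $(1,1)$ inequality is taken off the shelf from \cite{S70}. Two points you should make explicit when writing this up: the compact set $K$ must be chosen inside $Q$ itself (which is possible since $\nu$ is concentrated on $Q$), so that the orthant estimate $\leb^d(B(x,2r_x)\cap Q)\ge2^{-d}\leb^d(B(x,2r_x))$ applies; and the attainment $\nu(B(x,r_x))\ge M\lambda\,\leb^d(B(x,r_x))$ depends on the open/closed ball convention through the one-sided continuity of $r\mapsto\nu(B(x,r))$, as you correctly note.
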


For the reader's convenience, we give a proof of \cref{prop:main} in \cref{sec:proof_prop_main}.

Combining inequalities~\eqref{eq:standard_limsup} and~\eqref{eq:main}, we immediately deduce the following characterization of absolutely continuous measures in~$\R^d$.

\begin{corollary}\label{coroll:characterization}
Let $\mu$ be a finite Borel measure in $\R^d$. Then $\mu\ll\leb^d$ if and only if 
\begin{equation}\label{eq:coroll_characterization}
\lim_{\lambda\to+\infty} \lambda\,\leb^d(\set*{x\in\R^d: \M\mu(x)>\lambda})=0.
\end{equation}
\end{corollary}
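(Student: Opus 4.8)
The plan is to regard \cref{coroll:characterization} as nothing more than the conjunction of the two one‑sided bounds already at our disposal: the standard inequality~\eqref{eq:standard_limsup} supplies one implication, and the reverse inequality~\eqref{eq:main} of \cref{prop:main} supplies the other. Accordingly I would split the proof into the two implications of the ``if and only if'' and treat them separately.

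First the forward direction. If $\mu\ll\leb^d$, then $\mu^s=0$, hence $|\mu^s|(\R^d)=0$, and~\eqref{eq:standard_limsup} gives
\begin{equation*}
0\le\limsup_{\lambda\to+\infty}\lambda\,\leb^d(\set*{x\in\R^d:\M\mu(x)>\lambda})\lesssim_d|\mu^s|(\R^d)=0.
\end{equation*}
Since $\lambda\,\leb^d(\set*{x\in\R^d:\M\mu(x)>\lambda})\ge0$ for every $\lambda>0$, the vanishing of its $\limsup$ as $\lambda\to+\infty$ forces the full limit in~\eqref{eq:coroll_characterization} to exist and be $0$.

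Next the reverse direction. Assume~\eqref{eq:coroll_characterization}. The function $\lambda\mapsto\lambda\,\leb^d(\set*{x\in\R^d:\M\mu(x)>\lambda})$ is nonnegative on $(0,+\infty)$ and tends to $0$ as $\lambda\to+\infty$, so its infimum over $\lambda>0$ equals $0$; plugging this into~\eqref{eq:main} yields $|\mu^s|(\R^d)\lesssim_d0$, whence $|\mu^s|(\R^d)=0$, i.e.~$\mu^s=0$ and $\mu\ll\leb^d$. Granting \cref{prop:main}, this deduction has no real obstacle: the only step worth a line is the elementary remark that a nonnegative function with limit $0$ at $+\infty$ has infimum $0$, which is what connects the limit in~\eqref{eq:coroll_characterization} with the infimum in~\eqref{eq:main}.

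Of course the whole mathematical content sits in \cref{prop:main}, which we are assuming; if one wanted the corollary without it, all the difficulty would be in proving~\eqref{eq:main}. My approach there would be to establish the local version~\eqref{eq:main_local} by a Calderón--Zygmund decomposition of $Q$ at a level $\lambda>|\mu|(Q)/\leb^d(Q)$ — producing pairwise disjoint dyadic subcubes $Q_j$ with $|\mu|(Q_j)\le 2^d\lambda\,\leb^d(Q_j)$, each contained in $\set*{x\in Q:\M\mu(x)>c_d\lambda}$ for a dimensional constant $c_d\le 1$ — and then to observe that $\bigcup_jQ_j$ carries all of $|\mu^s|(Q)$, because $|\mu|$ restricted to the complementary ``good set'' is absolutely continuous; summing $\leb^d(Q_j)\ge 2^{-d}\lambda^{-1}|\mu|(Q_j)$ over $j$ and letting $Q$ exhaust $\R^d$ then gives~\eqref{eq:main}. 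The genuinely delicate point in that argument is precisely the claim that the good set carries no singular mass.
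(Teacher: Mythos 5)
Your proof of the corollary is correct and is exactly the paper's argument: the paper deduces \cref{coroll:characterization} ``immediately'' by combining the upper bound~\eqref{eq:standard_limsup} (forward implication) with the lower bound~\eqref{eq:main} of \cref{prop:main} (reverse implication), just as you do, and your elementary remarks about $\limsup$, nonnegativity, and the infimum are all that is needed to glue them together. Your supplementary sketch of \cref{prop:main} itself is outside the scope of this statement (you correctly grant the proposition), though I note the paper's appendix proves it not by a direct Calder\'on--Zygmund decomposition but by applying the reverse weak $(1,1)$ inequality to the averaged densities $f_\eps(x)=\mu(B(x,\eps))/\leb^d(B(x,\eps))$ and using that $f_\eps\to+\infty$ $\mu$-a.e.\ for singular $\mu$.
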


Inspired by \cref{coroll:characterization}, we give a new characterization of Sobolev~$W^{1,1}$ among $BV$ functions in term of the behaviour of a suitable maximal function. For $f\in L^1_{\rm loc}(\R^d)$, we define
\begin{equation}\label{eq:def_A}
\A f(x):=\sup_{r>0}\frac{1}{r}\dashint_{B(x,r)} |f-(f)_{x,r}|\ dy\in[0,+\infty]
\end{equation}
for all $x\in\R^d$, where $(f)_{x,r}=\dashint_{B(x,r)} f\,dy$. Note that, by Poincaré's inequality and by inequality~\eqref{eq:standard_limsup} applied to the measure $\mu=D f$, we have that
\begin{equation*}
\limsup_{\lambda\to+\infty}\lambda\,\leb^d\big(\set*{x\in\R^d : \A f(x)>\lambda}\big)\lesssim_d|D^s f|(\R^d).
\end{equation*}
Our main result is the following, see \cref{sec:BV_vs_S} for the proof.

\begin{theorem}\label{th:BV_vs_S}
	Let $f\in BV(\R^d)$. Then 
	\begin{equation*}
	\liminf_{\lambda\to+\infty}\lambda\,\leb^d\big(\set*{x\in\R^d : \A f(x)>\lambda}\big)\gtrsim_d|D^s f|(\R^d).
	\end{equation*}
	In particular, $f\in W^{1,1}(\R^d)$ if and only if
	\begin{equation*}
	\liminf_{\lambda\to+\infty}\lambda\,\leb^d\big(\set*{x\in\R^d : \A f(x)>\lambda}\big)=0.
	\end{equation*}
\end{theorem}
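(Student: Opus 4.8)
I would first dispose of the second assertion using the first together with the inequality displayed immediately before the statement. If $f\in W^{1,1}(\R^d)$ then $D^sf=0$, so $\limsup_{\lambda\to+\infty}\lambda\,\leb^d(\set*{x\in\R^d:\A f(x)>\lambda})\lesssim_d|D^sf|(\R^d)=0$ and in particular the $\liminf$ vanishes; conversely, if the $\liminf$ vanishes the first assertion gives $|D^sf|(\R^d)=0$, i.e.\ $Df\ll\leb^d$, which for a function of bounded variation means $f\in W^{1,1}(\R^d)$. So the whole content is the inequality
\begin{equation*}
\liminf_{\lambda\to+\infty}\lambda\,\leb^d\big(\set*{x\in\R^d:\A f(x)>\lambda}\big)\gtrsim_d|D^sf|(\R^d).
\end{equation*}

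My plan is to reduce this to \cref{prop:main} applied to the finite Borel measure $\mu:=D^sf$. Since $D^sf\perp\leb^d$ we have $\mu^s=\mu$, so \eqref{eq:main} becomes $\inf_{\lambda>0}\lambda\,\leb^d(\set*{x\in\R^d:\M(D^sf)(x)>\lambda})\gtrsim_d|D^sf|(\R^d)$. It therefore suffices to compare the super-level sets of $\A f$ and of $\M(D^sf)$: I would aim to prove that there are dimensional constants $c_d,C_d>0$ such that
\begin{equation*}
\leb^d\big(\set*{x\in\R^d:\A f(x)>c_d\,\lambda}\big)\ \ge\ C_d^{-1}\,\leb^d\big(\set*{x\in\R^d:\M(D^sf)(x)>\lambda}\big)
\end{equation*}
for all $\lambda$ sufficiently large (depending on $f$); combined with \cref{prop:main} this yields $\nu\,\leb^d(\set*{\A f>\nu})\gtrsim_d|D^sf|(\R^d)$ for all large $\nu$, hence the claim. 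To prove the comparison I would cover the open set $\set*{\M(D^sf)>\lambda}$ by a Besicovitch family of pairwise disjoint balls $B$ with $|D^sf|(B)>\lambda\,\leb^d(B)$ and try to produce, inside a fixed dilate of each such $B$, a set of $x$ of measure $\gtrsim_d\leb^d(B)$ on which $\A f(x)>c_d\lambda$. This is a \emph{reverse Poincaré inequality at fine scales}: where the relative variation of $D^sf$ is large, the quotient $\tfrac1\rho\dashint_{B(x,\rho)}|f-(f)_{x,\rho}|\,dy$ defining $\A$ must exceed $c_d\lambda$ at \emph{some} scale $\rho$ — in general much smaller than $\mathrm{rad}(B)$ — and the measure of the set where this happens would be bounded from below by a Minkowski-content-type estimate near the concentration set of $D^sf$ (its jump part being $(d-1)$-rectifiable).

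This last, localized step is the crux, and is also what forces a $\liminf$ rather than an $\inf$ in $\lambda$. The operator $\A$ controls only \emph{oscillation}, not total variation, so there is no pointwise bound $\A f\gtrsim_d\M(Df)$ (or $\A f\gtrsim_d\M(D^sf)$): inside a ball the total variation of $f$ can be arbitrarily large while the oscillation at the scale of that ball is negligible, because the contributions cancel — think of $D^sf$ supported in a thin spherical shell, or of $f$ the indicator of a heavily laminated set of finite perimeter. Such cancellation cannot survive passage to sufficiently fine scales, but turning this into a quantitative lower bound — and carrying out the bookkeeping that matches $\lambda\,\leb^d(\set*{\A f>\lambda})$ against $|D^sf|(\R^d)$ as $\lambda\to+\infty$, including the case in which the concentration set of the Cantor part $D^cf$ is not rectifiable — is the main technical obstacle, and is where the fine structure theory of $BV$ functions has to be used.
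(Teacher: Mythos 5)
Your reduction of the second assertion to the first is correct, and your overall scheme (compare superlevel sets of $\A f$ with those of a maximal function of the derivative, then invoke \cref{prop:main}) is in the right spirit. But the proposal has a genuine gap, and you identify it yourself: the entire difficulty of the theorem is concentrated in the ``localized step'' that you describe as the main technical obstacle and do not carry out. Asserting that cancellation ``cannot survive passage to sufficiently fine scales'' is not an argument; the whole point is to make this quantitative, uniformly in $x$, $r$ and $f$, and your sketch offers no mechanism for doing so. Your fallback (Besicovitch covering plus a Minkowski-content estimate near the concentration set of $D^sf$, using rectifiability of the jump part) does not cover the Cantor part, whose concentration set need not be rectifiable --- again a problem you flag but do not resolve. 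A secondary issue is that you propose to apply \cref{prop:main} to $\mu=D^sf$ alone, whereas $\A f$ is computed from $f$ itself and thus sees all of $Df$; one must work with the full measure $Df$ (whose singular part is what \eqref{eq:main} extracts) rather than with $D^sf$ in isolation.

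The idea that closes this gap in the paper is a \emph{penalized reverse Poincar\'e inequality} (\cref{lemma:penalized_reverse_P}): for dimensional constants $C_1,C_2$,
\begin{equation*}
\frac{1}{r}\int_{B(x,C_2r)}\abs*{f-(f)_{x,C_2r}}\,dy+\int_{B(x,C_2r)}\left(1-\scalar*{\nu,\eta}\right)d|Df|\ge C_1\int_{B(x,r)}d|Df|,
\end{equation*}
where $Df=\eta\,|Df|$. The correction term quantifies exactly the cancellation you worry about: it measures how far $Df$ is from pointing in the fixed direction $\nu$, and the inequality is proved by a compactness/contradiction argument for smooth $f$ (reducing to the one-dimensional monotone case) followed by Reshetnyak continuity. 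One then approximates the polar $\eta$ in $L^1(|Df|)$ by a Lipschitz map $\eta_\eps$ and takes $\nu=\eta_\eps(x)/|\eta_\eps(x)|$: the correction splits into a small measure $\nu_\eps=|\eta-\eta_\eps|\,|Df|$ plus a term of size $C_\eps r\,\M|Df|(x)$, which is negligible after restricting to scales $r<\tau$ and letting $\tau\to0$ before $\eps\to0$. This yields the pointwise bound $\A f+\M\nu_\eps+C_\eps\tau\,\M^{C_2\tau}(Df)\gtrsim_d\M^\tau(Df)$, to which \cref{prop:main} and \eqref{eq:standard_limsup} can be applied. Without this lemma (or a substitute of comparable strength), your proposal does not constitute a proof.
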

Exploiting some ideas coming from the proof of the aforementioned \cref{th:BV_vs_S}, we are able to improve \cref{prop:main} as follows. For a finite Borel measure~$\mu$ on~$\R^d$ (possibly with sign), we define
\begin{equation}\label{eq:def_weak_max_func}
\wM\mu(x)
:=\sup_{r>0}\frac{|\mu(B(x,r))|}{\leb^d(B(x,r))}\in[0,+\infty]
\end{equation}
for all $x\in\R^d$. Note that the maximal function defined in~\eqref{eq:def_weak_max_func} is weaker than the one recalled in~\eqref{eq:def_max_func}, in the sense that $\wM\mu(x)\le\M\mu(x)$ for all $x\in\R^d$. Then the following result holds, see \cref{sec:proof_main} for the proof.

\begin{theorem}\label{th:weak_main}
Let~$\mu$ be a finite Borel measure on~$\R^d$ (possibly with sign). Then
\begin{equation}\label{eq:theorem_weak_main}
\liminf_{\lambda\to+\infty}\lambda\,\leb^d(\set{x\in\R^d : \wM\mu(x)>\lambda}) \gtrsim_d |\mu^s|(\R^d).
\end{equation}
\end{theorem}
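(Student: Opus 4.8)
The plan is to reduce \cref{th:weak_main} to the already-proven \cref{prop:main} by localizing and choosing, in each region where $|\mu^s|$ concentrates, a sign-constant family of balls on which the weaker maximal function $\wM\mu$ still detects the singular mass. The point of departure is the observation that the only gap between $\wM\mu$ and $\M\mu$ is cancellation: if near a point $x$ the measure $\mu$ is essentially of one sign, then $|\mu(B(x,r))|$ and $|\mu|(B(x,r))$ are comparable, and the two maximal functions agree up to a constant there. By the Radon--Nikodym and Hahn decompositions, write $\mu = g\,\leb^d + \mu^s$ with $g \in L^1(\R^d)$ and $\mu^s \perp \leb^d$; decompose $\mu^s = \mu^s_+ - \mu^s_-$ into its positive and negative parts, which are mutually singular and singular with respect to $\leb^d$. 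Since $|\mu^s|(\R^d) = \mu^s_+(\R^d) + \mu^s_-(\R^d)$, it suffices to produce, for each sign, a lower bound of the form $\liminf_{\lambda\to+\infty}\lambda\,\leb^d(\{\wM\mu>\lambda\}) \gtrsim_d \mu^s_\pm(\R^d)$ separately, up to absorbing error terms.

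Fix a small $\eta>0$. Using that $\mu^s_+$ and $\mu^s_- + |g|\,\leb^d$ are mutually singular finite measures, choose by outer regularity a bounded open set $U$ with $\mu^s_+(\R^d\setminus U) < \eta$ and $(\mu^s_- + |g|\,\leb^d)(U) < \eta$; shrinking $U$ further, we may also assume $\leb^d(U)<\eta$. On $U$ the measure $\mu$ is, up to mass $2\eta$, equal to the nonnegative measure $\mu^s_+\llcorner U$, so for a ball $B\subset U$ we have $\mu(B) \ge \mu^s_+(B) - (\mu^s_- + |g|\,\leb^d)(B) \ge \mu^s_+(B) - \eta$, hence $|\mu(B)| \ge \mu^s_+(B) - \eta$ whenever $\mu^s_+(B)>\eta$. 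Now cover $U$ (up to a null set) by a Whitney-type family of disjoint dyadic cubes $\{Q_j\}$ with $Q_j\subset U$ and apply the local bound \eqref{eq:main_local} of \cref{prop:main} to the restriction $\mu\llcorner U$ on each $Q_j$: for $\lambda > |\mu\llcorner U|(Q_j)/\leb^d(Q_j)$ we get $\lambda\,\leb^d(\{x\in Q_j : \M(\mu\llcorner U)(x)>\lambda\}) \gtrsim_d |(\mu\llcorner U)^s|(Q_j)$. The key technical step is to upgrade $\M(\mu\llcorner U)$ to $\wM\mu$ on $Q_j$: for $x\in Q_j$ and $r$ not too large the ball $B(x,r)$ is contained in $U$ where $\mu$ is positive up to error $\eta$, so $\wM\mu(x) \ge \wM(\mu\llcorner U)(x) - (\text{error}) \gtrsim \M(\mu\llcorner U)(x)$ on the relevant scales; for large $r$ one checks directly that the average $|\mu(B(x,r))|/\leb^d(B(x,r))$ is bounded by a fixed multiple of $|\mu|(\R^d)/\text{(something)}$, so it does not affect the $\lambda\to+\infty$ regime. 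Summing over $j$ and using that the $Q_j$ are disjoint and that $\sum_j |(\mu\llcorner U)^s|(Q_j) = \mu^s_+(U) \ge \mu^s_+(\R^d) - \eta$ yields $\liminf_{\lambda\to\infty}\lambda\,\leb^d(\{\wM\mu>\lambda\}) \gtrsim_d \mu^s_+(\R^d) - C\eta$, and letting $\eta\to 0$ gives the bound for the positive part; the negative part is symmetric, and adding the two (the relevant sets $\{x\in Q_j\}$ for the two signs are disjoint since the corresponding open sets $U$ can be taken disjoint) gives \eqref{eq:theorem_weak_main}.

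**Main obstacle.** The delicate point is controlling the maximal function over \emph{large} radii, where $B(x,r)$ is no longer contained in the good set $U$ and cancellation with the complementary mass can in principle kill the numerator $|\mu(B(x,r))|$ of $\wM\mu$ — unlike $\M\mu$, which only ever sees $|\mu|$. The resolution I expect to use is that this is a non-issue in the relevant asymptotic regime: if $r \ge r_0$ for some fixed $r_0 = r_0(Q_j)>0$ then $|\mu(B(x,r))|/\leb^d(B(x,r)) \le |\mu|(\R^d)/(\omega_d r_0^d)$ is bounded by a constant independent of $\lambda$, so for $\lambda$ large enough the set $\{\wM\mu>\lambda\}\cap Q_j$ is governed entirely by small radii $r<r_0$, for which $B(x,r)\subset U$ and the comparison $\wM\mu \gtrsim_d \M(\mu\llcorner U)$ above is valid. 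Making this quantitative — tracking how $r_0$ and the error terms depend on $\eta$ and on the geometry of the Whitney decomposition, and ensuring the constants stay dimensional — is the part that requires care, but it is of the same flavor as the localization already carried out in the proof of \cref{prop:main}.
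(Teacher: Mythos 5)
Your overall strategy --- Hahn decomposition, localization to an open set $U$ on which $\mu$ is sign-definite up to small mass, Whitney cubes, and the local estimate \eqref{eq:main_local} --- is a genuinely different route from the one in \cref{sec:proof_main} (which approximates the polar density $\eta$ in $\mu=\eta|\mu|$ by a Lipschitz function and runs a single chain of pointwise maximal inequalities). However, the step you yourself single out as ``the key technical step'' has a genuine gap. You control the cancellation on a ball $B=B(x,r)\subset U$ only through the \emph{total mass} of the wrong-sign part, i.e.\ $|\mu(B)|\ge\mu^s_+(B)-\eta$ with the caveat ``whenever $\mu^s_+(B)>\eta$'', and from this you infer $\wM\mu(x)\gtrsim_d\M(\mu\llcorner U)(x)$ on the relevant scales. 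This inference is not justified: dividing by $\leb^d(B(x,r))=\omega_d r^d$ gives
\begin{equation*}
\frac{|\mu(B(x,r))|}{\omega_d r^d}\ \ge\ \frac{|\mu\llcorner U|(B(x,r))}{\omega_d r^d}-\frac{2\eta}{\omega_d r^d},
\end{equation*}
and the error $2\eta/(\omega_d r^d)$ blows up as $r\to0^+$, which is exactly the regime that matters: for $\lambda$ large, any radius achieving $|\mu|(B(x,r))/\omega_d r^d>\lambda$ satisfies $\omega_d r^d<|\mu|(\R^d)/\lambda$, so such a ball need only carry mass $\lambda\omega_d r^d$, which tends to $0$; in particular the proviso $\mu^s_+(B)>\eta$ is typically violated precisely on the balls that realize large averages, and the error term is at least of order $\eta\lambda$, i.e.\ comparable to (or larger than) the quantity you are trying to bound from below. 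If the small wrong-sign mass inside $U$ concentrates (say $\mu^s_-\llcorner U$ has atoms on the support of $\mu^s_+$), the cancellation at small scales is not controlled by any constant, and no pointwise comparison between $\wM\mu$ and $\M(\mu\llcorner U)$ holds on the whole of $Q_j$.

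The repair is to measure the cancellation by a maximal function rather than by a constant: setting $\sigma:=(\mu^s_-+|g|\,\leb^d)\llcorner U$, one has, for all $x$ with $\mathrm{dist}(x,\partial U)>\tau$,
\begin{equation*}
\wM^\tau\mu(x)\ \ge\ \M^\tau(\mu\llcorner U)(x)-2\M\sigma(x),
\end{equation*}
and the exceptional set $\set{\M\sigma>c\lambda}$ is then disposed of via the weak $(1,1)$ estimate \eqref{eq:(1,1)_estimate}, which gives $\leb^d(\set{\M\sigma>c\lambda})\lesssim_d\eta/\lambda$, hence a cost of only $O(\eta)$ after multiplying by $\lambda$, with $\eta\to0$ at the end. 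This is exactly the mechanism the paper implements with the measure $\nu_\eps=|\eta-\eta_\eps|\,|\mu|$ and the term $2\M\nu_\eps(x)$ in \eqref{eq:Lip_change_tau}; without some version of it your argument does not close. The other points you raise (large radii being irrelevant as $\lambda\to+\infty$, exhausting $\mu^s_+(U)$ by finitely many Whitney cubes before taking the liminf, choosing the dyadic grid so that $\mu^s$ does not charge cube boundaries) are indeed routine.
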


Hence \cref{coroll:characterization} still holds for any (possibly signed) finite Borel measure~$\mu$ in~$\R^d$ with~\eqref{eq:coroll_characterization} replaced by
\begin{equation*}
\liminf_{\lambda\to+\infty} \lambda\,\leb^d(\set*{x\in\R^d: \wM\mu(x)>\lambda})=0.
\end{equation*}

The last goal of this note --- which was our starting motivation for the study of inequality~\eqref{eq:main} --- comes from the theory of ordinary differential equations (ODEs) with weakly differentiable vector fields. 

The study of this theory was started by DiPerna and Lions in their seminal paper~\cite{DL89}, in which they proved existence and uniqueness of solutions of ODEs with Sobolev vector fields. The extension of the results obtained in~\cite{DL89} to vector fields with $BV$~regularity was established by Ambrosio in the groundbreaking paper~\cite{A04}, where the notion of \emph{regular Lagrangian flow} was introduced as a generalization of the classical definition of flow (see~\cite{CDeL08}*{Definition~1.1} for a precise definition). More in detail, the main result of~\cite{A04} reads as follows. For a time $T\in(0,+\infty]$, let $b\colon(0,T)\times\R^d\to\R^d$ be a bounded time-dependent vector field such that 
\begin{equation}\label{eq:assumptions_B}
b\in L^1((0,T);BV(\mathbb{R}^d;\R^d)),
\quad
\div b\in L^1((0,T);L^\infty(\mathbb{R}^d;\R^d)).
\end{equation}
Consider the associated Cauchy problem
\begin{equation}\label{eq:ODE}
\begin{cases}
\dfrac{dX_t}{dt}(x)=b_t(X_t(x)), \quad (t,x)\in (0,T)\times\R^d,\\[3mm]
X_0(x)=x\in\R^d.
\end{cases}
\end{equation}
Then there exists a unique regular Lagrangian flow $X\colon[0,T)\times\R^d\to\R^d$ solving~\eqref{eq:ODE}.

Both the results by DiPerna--Lions and Ambrosio rely on the so-called \textit{Eulerian approach}, meaning that the problem~\eqref{eq:ODE} is studied indirectly via the closely linked \emph{transport equation}. A more direct approach, the so-called \emph{Lagrangian approach}, was proposed by Crippa and De Lellis in~\cite{CDeL08}, where simple \textit{a priori} estimates were exploited in order to get existence, uniqueness, compactness and even mild regularity properties of the regular Lagrangian flow associated to a Sobolev $W^{1,p}$ vector field for every~$p>1$. Their approach has been extended to the case $p=1$ by Jabin in~\cite{J10}, where it was observed that if the quantity
\begin{equation}\label{eq:decay_quantity}
\mathcal{Q}(B;\delta):=|\log\delta|^{-1}
\int_0^T\int_B\min\set*{\delta^{-1},\M|Db_t|(x)}\ dx\,dt
\end{equation}
satisfies the decay property
\begin{equation}\label{eq:decay_property}
\limsup_{\delta\to0^+}\mathcal{Q}(B;\delta)=0
\end{equation}
for all balls $B\subset\R^d$, then there exists a unique regular Lagrangian flow associated to~$b$ (for a more detailed exposition of these results, see~\cite{N18}*{Section~1}).

Using \cref{prop:main}, we can prove that~\eqref{eq:decay_property} holds true if and only if 
\begin{equation*}
b\in L^1((0,T);W_{\rm loc}^{1,1}(\R^d;\R^d)),
\end{equation*}
so that the approach by Crippa--De Lellis and Jabin cannot be further extended to the case of $BV$~vector fields. Our result reads as follows, see \cref{sec:proof_decay} for the proof.

\begin{proposition}\label{th:decay}
Let $b\colon(0,T)\times\R^d\to\R^d$ be a vector field satisfying~\eqref{eq:assumptions_B}. Then
\begin{equation}\label{eq:th_decay}
\int_0^T |D^s b_t|(B)\ dt
\lesssim_d\liminf_{\delta\to0^+}\mathcal{Q}(B;\delta)
\le\limsup_{\delta\to0^+}\mathcal{Q}(B;\delta)
\lesssim_d\int_0^T |D^s b_t|(\closure{B})\ dt
\end{equation}
for all balls $B\subset\R^d$, where $\mathcal{Q}(B;\delta)$ is as in~\eqref{eq:decay_quantity}. In particular, the decay property~\eqref{eq:decay_property} is satisfied if and only if $b\in L^1((0,T);W_{\text{loc}}^{1,1}(\R^d;\R^d))$.
\end{proposition}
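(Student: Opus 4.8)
The plan is to view $\mathcal{Q}(B;\delta)$ as a \emph{logarithmic average} of the weak-type profile $g_t(\lambda):=\lambda\,\leb^d(\set*{x\in B:\M|Db_t|(x)>\lambda})$ and then to read off its behaviour as $\lambda\to+\infty$ from~\eqref{eq:standard_limsup} (for the upper estimate) and from~\cref{prop:main} (for the lower one). By Tonelli's theorem, $\int_B\min\set*{\delta^{-1},\M|Db_t|(x)}\,dx=\int_0^{\delta^{-1}}\leb^d(\set*{x\in B:\M|Db_t|(x)>\lambda})\,d\lambda$; splitting this integral at $\lambda=1$ and substituting $\lambda=e^s$ on $[1,\delta^{-1}]$ gives, for $0<\delta<1$,
\begin{equation*}
\mathcal{Q}(B;\delta)=\int_0^T F_t(\delta)\,dt,\qquad
F_t(\delta)=\frac{1}{|\log\delta|}\int_0^1\tfrac{g_t(\lambda)}{\lambda}\,d\lambda+\frac{1}{|\log\delta|}\int_0^{|\log\delta|}g_t(e^s)\,ds .
\end{equation*}
The first summand is $\le\leb^d(B)/|\log\delta|$, hence infinitesimal, and the second is the mean of $s\mapsto g_t(e^s)$ over a window widening to $\R^+$; consequently $\liminf_{\delta\to0^+}F_t(\delta)\ge\liminf_{\lambda\to+\infty}g_t(\lambda)$ and $\limsup_{\delta\to0^+}F_t(\delta)\le\limsup_{\lambda\to+\infty}g_t(\lambda)$ for every $t$. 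Moreover the weak $(1,1)$-estimate for the maximal function of the finite measure $|Db_t|$ gives $0\le g_t(\lambda)\lesssim_d|Db_t|(\R^d)$, so that $0\le F_t(\delta)\le\leb^d(B)+C_d|Db_t|(\R^d)=:h(t)$ for $\delta\le e^{-1}$, with $h\in L^1(0,T)$ by~\eqref{eq:assumptions_B}. (Only $b\in L^1((0,T);BV(\R^d;\R^d))$ is used; the divergence hypothesis in~\eqref{eq:assumptions_B} plays no role.)

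Next I would establish the two pointwise-in-$t$ limits. For the upper bound I would localize~\eqref{eq:standard_limsup}: given an open $U\supset\closure B$, if $x\in B$ and $r\le\operatorname{dist}(\closure B,\R^d\setminus U)$ then $B(x,r)\subset U$, while for larger $r$ the $|Db_t|$-average over $B(x,r)$ is at most a constant $c(U,t)$ depending only on $U$ and $|Db_t|(\R^d)$; hence $\M|Db_t|(x)\le\max\set*{\M(|Db_t|\llcorner U)(x),c(U,t)}$ for $x\in B$, and~\eqref{eq:standard_limsup} applied to the measure $|Db_t|\llcorner U$ yields $\limsup_{\lambda\to+\infty}g_t(\lambda)\lesssim_d|D^s b_t|(U)$; letting $U\downarrow\closure B$ gives $\limsup_{\lambda\to+\infty}g_t(\lambda)\lesssim_d|D^s b_t|(\closure B)$. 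For the lower bound I would decompose $B$ into countably many pairwise disjoint cubes $Q_i\subset B$ with $|D^s b_t|\big(B\setminus\bigcup_i Q_i\big)=0$ (for instance a Whitney-type decomposition, after a harmless translation of the grid making its faces $|D^s b_t|$-negligible), apply the local inequality~\eqref{eq:main_local} with $\mu=Db_t$ on each $Q_i$ — admissible once $\lambda>|Db_t|(Q_i)/\leb^d(Q_i)$, hence for $\lambda$ large — and, using that the sets $\set*{x\in Q_i:\M|Db_t|(x)>\lambda}\subset B$ are pairwise disjoint together with countable additivity of $|D^s b_t|$, obtain $\liminf_{\lambda\to+\infty}g_t(\lambda)\gtrsim_d|D^s b_t|(B)$.

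Finally I would pass the limits through the $t$-integral. Along a sequence $\delta_n\to0^+$ realizing $\limsup_\delta\mathcal{Q}(B;\delta)$, the reverse Fatou lemma (licit since $0\le F_t(\delta_n)\le h(t)\in L^1(0,T)$ eventually) and the previous step give
\begin{equation*}
\limsup_{\delta\to0^+}\mathcal{Q}(B;\delta)\le\int_0^T\limsup_{\delta\to0^+}F_t(\delta)\,dt\le\int_0^T\limsup_{\lambda\to+\infty}g_t(\lambda)\,dt\lesssim_d\int_0^T|D^s b_t|(\closure B)\,dt ,
\end{equation*}
while Fatou's lemma gives symmetrically $\liminf_{\delta\to0^+}\mathcal{Q}(B;\delta)\ge\int_0^T\liminf_{\lambda\to+\infty}g_t(\lambda)\,dt\gtrsim_d\int_0^T|D^s b_t|(B)\,dt$; together with the trivial $\liminf\le\limsup$ this is~\eqref{eq:th_decay}. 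For the last assertion, $b_t\in W^{1,1}_{\rm loc}(\R^d;\R^d)$ if and only if $|D^s b_t|(\R^d)=0$ (as $\R^d$ is $\sigma$-compact), and $\int_0^T|D^s b_t|(B_n)\,dt\uparrow\int_0^T|D^s b_t|(\R^d)\,dt$ along any balls $B_n\uparrow\R^d$ by monotone convergence, so the two outer terms of~\eqref{eq:th_decay} vanish for every ball $B$ exactly when $b\in L^1((0,T);W^{1,1}_{\rm loc}(\R^d;\R^d))$; chaining this with~\eqref{eq:th_decay} gives the stated equivalence with the decay property~\eqref{eq:decay_property}.

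The hard part is the localization in the second paragraph, that is, upgrading the global statements~\eqref{eq:standard_limsup} and~\cref{prop:main} to statements on $B$ and on $\closure B$. The sphere $\partial B$ is genuinely subtle, since $|D^s b_t|$ may charge it — this is precisely why~\eqref{eq:th_decay} carries $B$ on one side and $\closure B$ on the other rather than being an equality — and the restriction $|Db_t|\llcorner U$ with $U\downarrow\closure B$ (for the upper bound) and the disjoint-cube decomposition feeding~\eqref{eq:main_local} (for the lower bound) are exactly the devices that make the localization go through. The interchange of $\lim_{\delta\to0^+}$ with $\int_0^T(\cdot)\,dt$ is comparatively routine, being governed by the $L^1(0,T)$ domination $h$ coming from the weak $(1,1)$-estimate.
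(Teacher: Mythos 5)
Your proof is correct and arrives at~\eqref{eq:th_decay} by a genuinely different localization than the paper's. The shared skeleton is the same: both arguments read $\mathcal{Q}(B;\delta)$ through the layer-cake formula as a logarithmic average of your $g_t(\lambda)=\lambda\,\leb^d(\set{x\in B : \M|Db_t|(x)>\lambda})$ (the paper integrates $d\lambda/\lambda$ over $\lambda\in(\delta^{-1/2},\delta^{-1})$ rather than taking your Cesàro mean over $[1,\delta^{-1}]$ — the same device), prove the two bounds pointwise in $t$, and conclude by Fatou and dominated/reverse-Fatou arguments in $t$. The difference is in how the global statements are localized to $B$. For the lower bound the paper replaces $|Db_t|$ by the truncated measure $\mathbf{1}_{B_{r-\eps}}|Db_t|$, observes that its maximal function is $\lesssim_d|Db_t|(B_r)\eps^{-d}$ outside $B_r$, so that in the relevant range of $\lambda$ the superlevel set lies entirely inside $B_r$ and the \emph{global} inequality~\eqref{eq:main} applies, and finally lets $\eps\to0^+$; you instead partition $B$ into Whitney cubes and sum the \emph{local} inequality~\eqref{eq:main_local}. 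Both work: the truncation avoids your grid/faces bookkeeping, while your route avoids the quantitative $\eps^{-d}$ bound and the restriction $\lambda>\delta^{-1/2}$. For the upper bound the paper simply cites~\cite{N18}*{Remark 10}, whereas your restriction of the measure to a neighbourhood $U\downarrow\closure{B}$ makes that half self-contained. One step you should tighten: over an infinite Whitney family the admissibility thresholds $|Db_t|(Q_i)/\leb^d(Q_i)$ are \emph{not} uniformly bounded (the cubes shrink near $\partial B$), so ``hence for $\lambda$ large'' cannot be meant uniformly in $i$. The repair is immediate — apply~\eqref{eq:main_local} only to a finite subfamily $Q_1,\dots,Q_N$, deduce $\liminf_{\lambda\to+\infty}g_t(\lambda)\gtrsim_d\sum_{i\le N}|D^s b_t|(Q_i)$ for each fixed $N$, and then let $N\to\infty$ — but as written the sentence overstates what~\eqref{eq:main_local} gives you.
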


\section{Proof of \cref{th:weak_main}}
\label{sec:proof_main}

In this section, we prove \cref{th:weak_main}. Write $\mu=\eta|\mu|$, where $\eta\colon\R^d\to\R$ satisfies $|\eta(x)|=1$ for $|\mu|$-a.e.\ $x\in\R^d$. For each $\eps>0$ let $\eta_\eps\in\Lip(\R^d)$ be a Lipschitz function with Lipschitz constant $C_\eps>0$ such that 
\begin{equation}\label{eq:def_eta_eps}
\int_{\R^d} |\eta(x)-\eta_\eps(x)|\ d|\mu|(x) < \eps.
\end{equation}

We claim that
\begin{equation}\label{eq:Lip_change}
\frac{1}{\omega_dr^d}\abs*{\int_{B(x,r)} \eta_\eps\ d |\mu|} 
+2C_\eps r\M|\mu|(x)
\ge
\frac{1}{\omega_dr^d}\int_{B(x,r)}|\eta_{\eps}|\ d |\mu|,
\end{equation}
for all $x\in\R^d$ and all $r>0$. Indeed, we can estimate
\begin{align*}
\abs*{\int_{B(x,r)} \eta_\eps(y)\ d |\mu|(y)}
&\geq \abs*{\int_{B(x,r)} \eta_\eps(x)\ d |\mu|(y)}
	-\int_{B(x,r)}\abs*{\eta_\eps(y)-\eta_\eps(x)}\ d |\mu|(y)\\
&\hspace*{-2cm}\geq \int_{B(x,r)} \abs*{\eta_\eps(x)}\ d|\mu|(y)
	-C_\eps r\,\int_{B(x,r)} d|\mu|(y)\\ 
&\hspace*{-2cm}\geq \int_{B(x,r)}\abs*{\eta_\eps(y)}\ d|\mu|(y)
	-\int_{B(x,r)} \abs*{\abs*{\eta_\eps(y)}-\abs*{\eta_\eps(x)}}\ d|\mu|(y)
	-C_\eps r\,\int_{B(x,r)} d|\mu|(y)\\
&\hspace*{-2cm}\geq \int_{B(x,r)}\abs*{\eta_\eps(y)}\ d|\mu|(y)
	-2C_\eps r\,\int_{B(x,r)} d|\mu|(y) 
\end{align*}
for all $r>0$, from which~\eqref{eq:Lip_change} follows.

Let us now consider the measures $\nu_{\eps}:=|\eta-\eta_{\eps}|\,|\mu|$ for $\eps>0$. We claim that
\begin{equation}\label{eq:Lip_change_eps}
\frac{1}{\omega_dr^d}\abs*{\int_{B(x,r)}\eta\ d |\mu|}
+2C_\eps r\M\mu(x)+2\M\nu_\eps(x)
\ge
\frac{1}{\omega_dr^d}\int_{B(x,r)}|\eta|\ d |\mu|
\end{equation}
for all $x\in\R^d$ and all $r>0$. Indeed, assuming $\M\mu(x)<+\infty$ and $\M\nu_\eps(x)<+\infty$ without loss of generality, by~\eqref{eq:Lip_change} we can estimate
\begin{align*}
\frac{1}{\omega_dr^d}\abs*{\int_{B(x,r)}\eta\ d |\mu|}
&\ge\frac{1}{\omega_dr^d}\abs*{\int_{B(x,r)}\eta_\eps\ d |\mu|}
	-\M\nu_\eps(x)\\
&\ge\frac{1}{\omega_dr^d}\int_{B(x,r)}|\eta_{\eps}|\ d |\mu|
	-2C_\eps r\M\mu(x)-\M\nu_\eps(x)\\
&\ge\frac{1}{\omega_dr^d}\int_{B(x,r)}|\eta|\ d |\mu|
	-2C_\eps r\M\mu(x)-2\M\nu_\eps(x).
\end{align*}

For $\tau>0$ define
\begin{equation}\label{eq:def_M_stopped}
\M^\tau\mu(x):=\sup_{0<r<\tau}\frac{|\mu|(B(x,r))}{\leb^d(B(x,r))}\in[0,+\infty],
\end{equation}
and similarly
\begin{equation*}
\wM^\tau\mu(x)
:=\sup_{0<r<\tau}\frac{|\mu(B(x,r))|}{\leb^d(B(x,r))}\in[0,+\infty],
\end{equation*}
for all $x\in\R^d$. Taking the supremum with respect to $r\in(0,\tau)$ in~\eqref{eq:Lip_change_eps}, we find
\begin{equation}\label{eq:Lip_change_tau}
\wM^\tau\mu(x)+2C_\eps\tau\M\mu(x)+2\M\nu_\eps(x)\geq\M^\tau\mu(x),
\end{equation}
for all $x\in\R^d$. 

We claim that
\begin{equation}\label{eq:liminf_tau_eps}
\liminf_{\lambda\to+\infty}\lambda\,\leb^d(\set{x\in\R^d : \wM\mu(x)>\lambda})
	+C_\eps\tau|\mu^s|(\R^d)+\eps
\gtrsim_d
|\mu^s|(\R^d),
\end{equation}
for all $\eps>0$ and $\tau>0$. Indeed, since $|\mu|(\R^d)<+\infty$, given $\tau>0$, for all $\lambda>0$ sufficiently large it holds
\begin{equation}\label{eq:magic_stop}
\set{x\in\R^d : \M^\tau\mu(x)>\lambda}
=\set{x\in\R^d : \M\mu(x)>\lambda}.
\end{equation}
Thus, on the one hand, by~\eqref{eq:main}, we have
\begin{equation*}
\begin{split}
\liminf_{\lambda\to+\infty}\lambda\,\leb^d(&\set{x\in\R^d : \M^\tau\mu(x)>\lambda})\\
&=\liminf_{\lambda\to+\infty}\lambda\,\leb^d(\set{x\in\R^d : \M\mu(x)>\lambda})
\gtrsim_d|\mu^s|(\R^d).
\end{split}
\end{equation*}
On the other hand, by~\eqref{eq:standard_limsup}, \eqref{eq:def_eta_eps} and~\eqref{eq:Lip_change_tau} we can estimate
\begin{align*}
\liminf_{\lambda\to+\infty}&\lambda\,\leb^d(\set{x\in\R^d : \M^\tau\mu(x)>\lambda})\\
&\le\liminf_{\lambda\to+\infty}\lambda\,\leb^d(\set{x\in\R^d : \wM^\tau\mu(x)+2C_\eps\tau\M\mu(x)+2\M\nu_\eps(x)>\lambda})\\
&\lesssim_d\liminf_{\lambda\to+\infty}\lambda\,\leb^d(\set{x\in\R^d : \wM^\tau\mu(x)>\lambda})\\
	&\quad+C_\eps\tau\limsup_{\lambda\to+\infty}\lambda\,\leb^d(\set{x\in\R^d : \M\mu(x)>\lambda})\\
	&\quad+\limsup_{\lambda\to+\infty}\lambda\,\leb^d(\set{x\in\R^d : \M\nu_\eps(x)>\lambda})\\
&\le\liminf_{\lambda\to+\infty}\lambda\,\leb^d(\set{x\in\R^d : \wM^\tau\mu(x)>\lambda})
	+C_\eps\tau|\mu^s|(\R^d)+\int_{\R^d}|\eta-\eta_{\eps}|\ d |\mu|\\
&\le\liminf_{\lambda\to+\infty}\lambda\,\leb^d(\set{x\in\R^d : \wM^\tau\mu(x)>\lambda})
	+C_\eps\tau|\mu^s|(\R^d)+\eps
	\\
	&=\liminf_{\lambda\to+\infty}\lambda\,\leb^d(\set{x\in\R^d : \wM\mu(x)>\lambda})
	+C_\eps\tau|\mu^s|(\R^d)+\eps,
\end{align*}
for all $\eps>0$ and $\tau>0$. Inequality~\eqref{eq:liminf_tau_eps} thus follows. Therefore, passing to the limit in~\eqref{eq:liminf_tau_eps} first as~$\tau\to0$ and then as~$\eps\to0$, we get~\eqref{eq:theorem_weak_main}. This concludes the proof.

\section{Proof of Theorem~\ref{th:BV_vs_S}}
\label{sec:BV_vs_S}

In this section, we prove \cref{th:BV_vs_S}. The idea of the proof is to estimate the quantity~$\A f$ defined in~\eqref{eq:def_A} from below with the integral average of~$|Df|$, in the spirit of the reverse Poincaré's inequality. Obviously, it is not possible to get such an estimate for arbitrary $BV$~functions (with a constant that does not depend on the function itself). 

However, it is simple to see that a reverse Poincaré's inequality is true for one-variable monotone functions, so that one would expect that a sort of reverse Poincaré's inequality may hold for arbitrary $BV$~functions if first one specifies a direction $\nu\in\R^d$, $|\nu|=1$, and then adds a suitable correction term measuring how far is~$f$ from being dependent only on the direction~$\nu$ and monotone.

\begin{lemma}\label{lemma:penalized_reverse_P}
There exist two dimensional constants $C_1,C_2>0$ such that
\begin{equation}\label{eq:penalized_reverse_P}
\frac{1}{r}\int_{B(x,C_2r)}\abs*{f-(f)_{x,C_2r}}\ dy
+\int_{B(x,C_2r)}\left( 1-\scalar*{\nu,\eta}\right)\ d|D f|
\ge
C_1\int_{B(x,r)} d|D f|
\end{equation}
for all $\nu\in\R^d$ with $|\nu|=1$, $x\in\R^d$, $r>0$ and $f\in BV(\R^d)$, where $\eta\colon\R^d\to\R^d$ satisfies $D f=\eta\,|D f|$ and $|\eta|=1$ $|D f|$-a.e.\ in~$\R^d$.
\end{lemma}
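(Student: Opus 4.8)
The plan is to reduce the statement to a one--dimensional computation by slicing $f$ along the direction $\nu$, and to use the correction term $\int_{B(x,C_2r)}(1-\scalar*{\nu,\eta})\,d|Df|$ to control the discrepancy between $|Df|$ and the absolute value of the directional derivative $D_\nu f=\scalar*{\nu,Df}$. The starting observation is that, for a fixed unit vector $\nu$, one has the pointwise bound $|D_\nu f|=|\scalar*{\nu,\eta}|\,|Df|\ge |Df|-(1-\scalar*{\nu,\eta})|Df|$ as measures (using $|\scalar*{\nu,\eta}|\ge\scalar*{\nu,\eta}\ge 2\scalar*{\nu,\eta}-1 \ge 1-2(1-\scalar*{\nu,\eta})$, or more simply $1-|\scalar*{\nu,\eta}|\le 1-\scalar*{\nu,\eta}$ when $\scalar*{\nu,\eta}\ge 0$ and a crude bound otherwise), so that
\begin{equation*}
\int_{B(x,r)} d|Df| \le \int_{B(x,r)} |\scalar*{\nu,\eta}|\,d|Df| + \int_{B(x,r)}\left(1-\scalar*{\nu,\eta}\right)d|Df|.
\end{equation*}
Thus it suffices to prove a reverse Poincar\'e estimate $\frac1r\int_{B(x,C_2r)}|f-(f)_{x,C_2r}|\,dy \gtrsim_d \int_{B(x,r)}|D_\nu f|$, i.e.\ an estimate controlling the total variation of the directional derivative in a direction $\nu$ by the oscillation of $f$ on a slightly larger ball.

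The key step is this reverse Poincar\'e inequality for the directional derivative. Without loss of generality take $\nu=e_d$ and write points as $(y',t)\in\R^{d-1}\times\R$. By the slicing theory for $BV$ functions, for $\leb^{d-1}$--a.e.\ $y'$ the slice $f^{y'}(t):=f(y',t)$ is a one--variable $BV$ function, and $|D_d f|(A) = \int_{\R^{d-1}} |Df^{y'}|(A_{y'})\,dy'$ where $A_{y'}$ is the corresponding slice of $A$. For a single-variable $BV$ function $g$ on an interval $I$, the elementary reverse Poincar\'e bound $|Dg|(I) \le \tfrac{C}{|I|}\int_{I'} |g-(g)_{I'}|\,dt$ holds for a suitable concentric interval $I'\supset I$ with $|I'|\sim|I|$: indeed, the oscillation of $g$ on $I$ is at least a dimension--free fraction of $|Dg|(I)$ only after one allows a slightly larger interval, using that the mean of $g$ over $I'$ separates the ``high'' and ``low'' levels of $g$. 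Applying this slice by slice and integrating in $y'$ gives $|D_d f|(B(x,r)) \lesssim \frac1r \int_{Q} |f - c|\,dy$ for a cube or ball $Q$ of radius $C_2 r$ and a suitable constant $c$; replacing $c$ by the average $(f)_{x,C_2r}$ costs only a factor $2$ by the triangle inequality (since $\int_Q |f-(f)_{x,C_2r}| \le 2\inf_c \int_Q|f-c|$).

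I would organize the write--up as: (i) fix $\nu=e_d$ by rotation invariance; (ii) prove the one--dimensional reverse Poincar\'e lemma with explicit concentric intervals; (iii) apply slicing and integrate; (iv) combine with the pointwise measure inequality $\int_{B(x,r)}d|Df|\le \int_{B(x,r)}|\scalar*{\nu,\eta}|\,d|Df| + \int_{B(x,r)}(1-\scalar*{\nu,\eta})\,d|Df|$ and with the bound $|\scalar*{\nu,\eta}|\le$ something controlled by $1-\scalar*{\nu,\eta}$ on the bad set, to conclude. The main obstacle is the one--dimensional reverse Poincar\'e step and, in particular, pinning down the enlargement factor $C_2$ and checking the constant $C_1$ is genuinely dimension--free: one must be careful that a monotone slice can have all its variation concentrated near an endpoint of $B(x,r)$, which is exactly why the larger ball $B(x,C_2r)$ is needed; the averaging over $y'$ then has to be done so that the enlargement is uniform in $y'$, which is automatic once $C_2$ is chosen at the level of balls rather than slices. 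A secondary technical point is handling the direction--dependence cleanly: the inequality $1-|\scalar*{\nu,\eta}| \le 2(1-\scalar*{\nu,\eta})$ fails when $\scalar*{\nu,\eta}<0$, so on that part one simply bounds $|\scalar*{\nu,\eta}|\le 1 \le 1-\scalar*{\nu,\eta}$, which is more than enough and absorbs into the correction term.
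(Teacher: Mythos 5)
Your reduction contains a fatal error: the one--dimensional ``reverse Poincar\'e'' inequality you invoke, $|Dg|(I)\le \tfrac{C}{|I|}\int_{I'}|g-(g)_{I'}|\,dt$ for a general one--variable $BV$ function $g$, is false, no matter how the concentric interval $I'$ is chosen. Take $g(t)=\eps\sin(Nt)$: then $|Dg|(I)\sim \eps N|I|$ is unbounded as $N\to\infty$, while $\tfrac{1}{|I|}\int_{I'}|g-(g)_{I'}|\le 2\eps|I'|/|I|$ stays bounded. This is precisely the obstruction the statement is designed around (a reverse Poincar\'e holds for \emph{monotone} slices, not for arbitrary $BV$ slices). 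Consequently the intermediate claim you reduce to, namely $|D_\nu f|(B(x,r))\lesssim_d \frac1r\int_{B(x,C_2r)}|f-(f)_{x,C_2r}|\,dy$, is also false: for $f(y)=\eps\sin(Ny_d)$ and $\nu=\mathrm{e}_d$ the left side is of order $\eps N r^d$ and the right side of order $\eps r^{d}$, with no help from the correction term since you discarded it at this stage.

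The sign you threw away in the first step is exactly what saves the argument. One has the identity of measures $|Df|=\scalar*{\nu,\eta}\,|Df|+(1-\scalar*{\nu,\eta})\,|Df|=D_\nu f+(1-\scalar*{\nu,\eta})\,|Df|$, so the quantity that must be controlled by the oscillation is the \emph{signed} total $D_\nu f(B(x,r))$ (equivalently, the net increase $g(b)-g(a)$ of each slice), not the total variation $|D_\nu f|(B(x,r))$. The correction term $\int(1-\scalar*{\nu,\eta})\,d|Df|$ is needed not only to pass from $|Df|$ to the $\nu$--directional derivative but, crucially, to absorb the negative part $(D_\nu f)^-$; in the oscillating example above it is comparable to $|Df|(B)$ and carries the whole estimate. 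The net increase of a slice \emph{is} controlled by the $L^1$ oscillation after averaging over the endpoints of the enlarged interval --- this is the computation $\int_{-1-s}^{1+s}g'\,dy_1=g(1+s)-g(-1-s)$ followed by integration in $s$, which is how the paper proves the inequality (by a compactness/contradiction argument on the unit cube for smooth $f$, then Reshetnyak continuity for general $BV$). So your plan can be repaired by replacing $|D_\nu f|$ with $D_\nu f$ throughout and proving the one--dimensional estimate for the net increase only; as written, step (ii) of your outline asks for a false lemma. (A minor side remark: your worry that $1-|\scalar*{\nu,\eta}|\le 2(1-\scalar*{\nu,\eta})$ ``fails'' for $\scalar*{\nu,\eta}<0$ is unfounded --- $1-|t|\le 1-t$ always --- but this is immaterial next to the main gap.)
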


\begin{proof}
We claim that there exist two dimensional constants $C_1,C_2>0$ such that
\begin{equation}\label{eq:mary_smooth}
\frac{1}{r}\int_{B(x,C_2r)}\abs*{f-(f)_{x,C_2r}}\ dy+\int_{B(x,C_2r)}\left( |\nabla f|-\scalar*{\nu,\nabla f}\right)\ dy
\ge
C_1\int_{B(x,r)}\abs*{\nabla f}\ dy
\end{equation}
for all $\nu\in\R^d$ with $|\nu|=1$, $x\in\R^d$, $r>0$ and $f\in C^\infty(\R^d)$. By a standard rescaling argument, we just need to prove that there exists a dimensional constant $C>0$ such that
\begin{equation}\label{eq:mary_Q}
\int_{2Q}\abs*{f-(f)_{2Q}}\ dx
+\int_{2Q} \left(|\nabla f|-\scalar*{\mathrm{e}_1,\nabla f}\right)\ dy
\ge
C\int_{Q}\abs*{\nabla f}\ dy
\end{equation}
for all $f\in C^\infty(\R^d)$, where $Q=[-1,1]^d$ and $(f)_{2Q}:=\dashint_{2Q}f\,dy$. We prove~\eqref{eq:mary_Q} by contradiction. For all $n\in\N$, assume there exists $f_n\in C^\infty(\R^d)$ such that
\begin{equation*}
\int_Q |\nabla f_n|\ dy=1,
\quad 
(f_n)_{2Q}=0,
\quad 
\int_{2Q}\abs*{f_n}\ dy
+\int_{2Q} \left(|\nabla f_n|-\scalar*{\mathrm{e}_1,\nabla f_n}\right)\ dy
<\frac{1}{n}.
\end{equation*}
Then consider $g_n\colon 2I\to\R$, $I=[-1,1]$, defined as 
\begin{equation*}
g_n(t):=\int_{[-1,1]^{d-1}} g_n(t,y')\ dy',
\qquad
t\in 2I.
\end{equation*}
For all $s\in[0,1]$, we have
\begin{equation*}
\begin{split}
\frac{1}{n}
&\ge\int_{2Q}\left(|\nabla f_n|-\scalar*{\mathrm{e}_1,\nabla f_n}\right)\ dy\\
&\ge\int_{-1-s}^{1+s}\int_{[-1,1]^{d-1}}|\nabla f_n(y_1,y')|-\scalar*{\mathrm{e}_1,\nabla f_n(y_1,y')}\ dy'\,dy_1\\
&\ge 1-\int_{-1-s}^{1+s}g_n'(y_1)\ dy_1
=1-\big(g_n(1+s)-g_n(-1-s)\big),
\end{split}
\end{equation*}
so that
\begin{equation*}
g_n(1+s)-g_n(-1-s)\ge 1-\frac{1}{n},
\qquad
n\in\N.
\end{equation*}
This contradicts the fact that $g_n\to 0$ in $L^1(2I)$, since $\|g_n\|_{L^1(2I)}\le\|f_n\|_{L^1(2Q)}\le\frac{1}{n}$. This concludes the proof of~\eqref{eq:mary_Q} and thus inequality~\eqref{eq:mary_smooth} follows.

We can now conclude the proof by a standard approximation argument. Given $f\in BV(\R^d)$, by~\cite{AFP00}*{Theorem~3.9 and Proposition~3.13} we can find $f_n\in BV(\R^d)\cap C^\infty(\R^d)$ such that $f_n\to f$ in $L^1(\R^d)$, $\|\nabla f_n\|_{L^1(\R^d)}\to|D f|(\R^d)$ and $|\nabla f_n|\,\leb^d\weakstarto|D f|$ in $\R^d$ as $n\to+\infty$. Therefore, inequality~\eqref{eq:penalized_reverse_P} follows by Reshetnyak's continuity Theorem, see~\cite{AFP00}*{Theorem~2.39}.
\end{proof}

\begin{remark}
We must have $C_2>1$ in \cref{lemma:penalized_reverse_P}, as the following example shows. For $n\in\N$, consider $f_n\colon I\to\R$, $I=[-1,1]$, defined as
\begin{equation*}
f_n(x)=
\begin{cases}
	nx+n-1 &\quad -1\le x<-\left(1-\frac{1}{n}\right)\\
	0                 &\quad -\left(1-\frac{1}{n}\right)\le x\le 1-\frac{1}{n}\\
	nx+1-n  &\quad 1-\frac{1}{n}<x\le 1.
\end{cases}
\end{equation*}
Then $(f_n)=0$, $\|f_n\|_{L^1(I)}=\frac{1}{n}$ and $\|f_n'\|_{L^1(I)}=2$, so that inequality~\eqref{eq:penalized_reverse_P_BV} with $C_2=1$, $v=1$, $x=0$ and $r=1$ would imply $C_1\ge 2n$, a contradiction.
\end{remark}

We will not apply \cref{lemma:penalized_reverse_P} directly, but we will use the following easy consequence of it. There exist two dimensional constants $C_1,C_2>0$ such that
\begin{equation}\label{eq:penalized_reverse_P_any_norm}
\frac{1}{r}\int_{B(x,C_2r)}\abs*{f-(f)_{x,C_2r}}\ dy
+2\int_{B(x,C_2r)} |\eta-v|\ d|D f|
\ge
C_1\int_{B(x,r)} d|D f|
\end{equation}
for all $v\in\R^d$, $x\in\R^d$, $r>0$ and $f\in BV(\R^d)$, where $\eta\colon\R^d\to\R^d$ is as in \cref{lemma:penalized_reverse_P}. 
The proof of~\eqref{eq:penalized_reverse_P_any_norm} is immediate. Indeed, since we can assume $v\ne0$ without loss of generality, we just need to notice that
\begin{equation*}
2|\eta-v|\ge 1-\scalar*{\frac{v}{|v|},\eta}
\qquad
\text{$|D f|$-a.e.\ in~$\R^d$},
\end{equation*}
and apply \cref{lemma:penalized_reverse_P} with $\nu=v/|v|$.

Having inequality~\eqref{eq:penalized_reverse_P_any_norm} at our disposal, we are now ready to prove \cref{th:BV_vs_S}.

\begin{proof}[Proof of \cref{th:BV_vs_S}]
Fix $f\in BV(\R^d)$ and write $D f=\eta\,|D f|$, where $\eta\colon\R^d\to\R^d$ is as in \cref{lemma:penalized_reverse_P}. For each $\eps>0$, let $\eta_\eps\in\Lip(\R^d)$ be a Lipschitz function with Lipschitz constant $C_\eps>0$ such that the measure $\nu_\eps:=|\eta-\eta_\eps|\,|D f|$ satisfies
\begin{equation}\label{eq:def_eta_eps_BV}
\nu_\eps(\R^d)=\int_{\R^d} |\eta-\eta_\eps|\ d|D f| < \eps.
\end{equation}

Now fix $x\in\R^d$. Applying \cref{lemma:penalized_reverse_P} with $v=\eta_\eps(x)$, we get
\begin{equation}\label{eq:penalized_reverse_P_BV}
\frac{1}{r}\int_{B(x,C_2r)}\abs*{f(y)-(f)_{x,C_2r}}\ dy
+2\int_{B(x,C_2r)} |\eta(y)-\eta_\eps(x)|\ d|D f|(y)
\ge 
C_1\int_{B(x,r)} d|D f|(y).
\end{equation}
Note that
\begin{equation}\label{eq:error_reverse_BV}
\begin{split}
\int_{B(x,C_2r)} & |\eta(y)-\eta_\eps(x)|\ d|D f|(y)\\
&\le\int_{B(x,C_2r)} |\eta(y)-\eta_\eps(y)|\ d|D f|(y)
+\int_{B(x,C_2r)} |\eta_\eps(y)-\eta_\eps(x)|\ d|D f|(y)\\
&\le\int_{B(x,C_2r)} d\nu_\eps(y)+C_2C_\eps r\int_{B(x,C_2r)} d|D f|(y).
\end{split}
\end{equation}
Combining~\eqref{eq:penalized_reverse_P_BV} and~\eqref{eq:error_reverse_BV}, we get that
\begin{equation}\label{eq:ineq1_BV}
\A f(x)+\M\nu_\eps(x)+\frac{C_\eps r}{\omega_d(C_2r)^d}\int_{B(x,C_2r)} d|D f|
\gtrsim_d
\frac{1}{\omega_dr^d}\int_{B(x,r)} d|D f|,
\end{equation}
for all $x\in\R^d$ and $r>0$, where~$\A f$ is the function defined in~\eqref{eq:def_A}.

Now fix $\tau>0$. Taking the supremum for $r\in(0,\tau)$ in~\eqref{eq:ineq1_BV} and recalling the definition in~\eqref{eq:def_M_stopped}, we get that
\begin{equation*}
\A f(x)+\M\nu_\eps(x)+C_\eps\tau\,\M^{C_2\tau}(D f)(x)
\gtrsim_d
\M^\tau(D f)(x).
\end{equation*}
Thus, by the observation made in~\eqref{eq:magic_stop}, inequalities~\eqref{eq:def_eta_eps_BV}, \eqref{eq:standard_limsup} and \cref{prop:main}, we conclude that
\begin{equation*}
\begin{split}
|D^s f|(\R^d)&\lesssim_d\liminf_{\lambda\to+\infty}\lambda\,\leb^d(\set*{x\in\R^d : \M (D f)(x)>\lambda})\\
&=\liminf_{\lambda\to+\infty}\lambda\,\leb^d(\set*{x\in\R^d : \M^\tau(D f)(x)>\lambda})\\
&\lesssim_d\liminf_{\lambda\to+\infty}\lambda\,\leb^d(\set*{x\in\R^d : \A f(x)>\lambda})\\
&\quad+\limsup_{\lambda\to+\infty}\lambda\,\leb^d(\set*{x\in\R^d : \M\nu_\eps(x)>\lambda})\\
&\quad+C_\eps\tau\limsup_{\lambda\to+\infty}\lambda\,\leb^d(\set*{x\in\R^d : \M ^{C_2\tau}(D f)(x)>\lambda})\\
&\lesssim_d\liminf_{\lambda\to+\infty}\lambda\,\leb^d(\set*{x\in\R^d : \A f(x)>\lambda})
+\nu_\eps(\R^d)+C_\eps\tau|D^s f|(\R^d)\\
&\lesssim_d\liminf_{\lambda\to+\infty}\lambda\,\leb^d(\set*{x\in\R^d : \A f(x)>\lambda})
+\eps+C_\eps\tau|D^s f|(\R^d),\\
\end{split}
\end{equation*}
for all $\eps>0$ and $\tau>0$. Passing to the limit first as~$\tau\to0$ and then as~$\eps\to0$, we get
\begin{equation*}
\liminf_{\lambda\to+\infty}\lambda\,\leb^d(\set*{x\in\R^d : \A f(x)>\lambda})\gtrsim_d
|D^s f|(\R^d).
\end{equation*}
This concludes the proof.
\end{proof}

\section{Proof of \cref{th:decay}}
\label{sec:proof_decay}

In this section, we prove \cref{th:decay}. Let $b\colon(0,T)\times\R^d\to\R^d$ be a vector field satisfying~\eqref{eq:assumptions_B}. By~\cite{N18}*{Remark 10}, the quantity
\begin{equation*}
\mathcal{Q}(B;\delta)=|\log\delta|^{-1}
\int_{0}^{T}\int_B\min\set*{\delta^{-1},\M|Db_t|(x)}\ dx\,dt
\end{equation*}
defined in~\eqref{eq:decay_quantity} satisfies
\begin{equation*}
\limsup_{\delta\to0^+}\mathcal{Q}(B;\delta)
\lesssim_d\int_{0}^{T} |D^sb_t|(\closure{B})\ dt
\end{equation*}
for all balls $B\subset\R^d$. This proves the second part of~\eqref{eq:th_decay}. To prove the first part of~\eqref{eq:th_decay}, fix a ball $B=B_r\subset\R^d$ of radius $r>0$. We claim that  
\begin{equation}\label{eq:decay_claim}
\liminf_{\delta\to0^+}|\log\delta|^{-1}
\int_B\min\set*{\delta^{-1},\M|Db_t|(x)}\ dx
\gtrsim_d
|D^sb_t|(B_r)
\end{equation}
holds for a.e.\ $t\in(0,T)$, so that the conclusion follows by Fatou's Lemma. Indeed, for any $\eps\in(0,r/2)$ and for a.e.\ $t\in(0,T)$, we can estimate 
\begin{equation*}
\int_{B_r}\min\set*{\delta^{-1},\M|Db_t|(x)}\ dx\ge
\int_{\delta^{-1/2}}^{\delta^{-1}}\leb^d\left(\set*{x\in B_r : \M\big(\mathbf{1}_{B_{r-\eps}}|Db_t|\big)(x)>\lambda}\right)d\lambda.
\end{equation*}
Note that $\M\big(\mathbf{1}_{B_{r-\eps}}|Db_t|\big)(x)\lesssim_d |Db_t|(B_{r})\varepsilon^{-d}$ for every $x\in\R^d\setminus B_r$ and for a.e.\ $t\in(0,T)$. Indeed, if $|x|>r$ then $B_{r-\eps}\cap B(x,s)=\varnothing$ for all $s\in[0,\eps]$, so that
\begin{equation*}
\M\big(\mathbf{1}_{B_{r-\eps}}|Db_t|\big)(x)
=\sup_{s>0}\frac{|Db_t|(B_{r-\eps}\cap B(x,s))}{\leb^d(B(x,s))}
\le\frac{|Db_t|(B_{r-\eps})}{\leb^d(B(x,\eps))}
\lesssim_d |Db_t|(B_r)\eps^{-d}.
\end{equation*}
Now, using the decomposition
\begin{align*}
\leb^d\left(\set*{x\in B_r : \M\big(\mathbf{1}_{B_{r-\eps}}|Db_t|\big)(x)>\lambda}\right)
=&\leb^d\left(\set*{x\in \R^d : \M\big(\mathbf{1}_{B_{r-\eps}}|Db_t|\big)(x)>\lambda}\right)\\
&-\leb^d\left(\set*{x\in \R^d\setminus B_r : \M\big(\mathbf{1}_{B_{r-\eps}}|Db_t|\big)(x)>\lambda}\right)
\end{align*}
and \cref{prop:main}, we get that 
\begin{equation*}
\inf_{\lambda\in (\delta^{-1/2},\delta^{-1})}\lambda\,\leb^d\left(\set*{x\in B_r : \M\big(\mathbf{1}_{B_{r-\eps}}|Db_t|\big)(x)>\lambda}\right)
\gtrsim_d
|D^sb_t|(B_{r-\eps})
\end{equation*}
for all $\delta\lesssim_d|Db_t|(B_{r})^{-1/2}\varepsilon^{d/2}$ and for a.e.\ $t\in (0,T)$. Hence, for $\delta>0$ sufficiently small, we obtain that
\begin{equation*}
|\log\delta|^{-1}\int_{B_r}\min\set*{\delta^{-1},\M|Db_t|(x)}\ dx
\gtrsim_d |\log\delta|^{-1}\int_{\delta^{-1/2}}^{\delta^{-1}}|D^sb_t|(B_{r-\eps})\ \frac{d\lambda}{\lambda}
\gtrsim_d |D^sb_t|(B_{r-\eps}).
\end{equation*}
Therefore 
\begin{equation*}
\liminf_{\delta\to0^+}|\log\delta|^{-1}
\int_B\min\set*{\delta^{-1},\M|Db_t|(x)}\ dx
\gtrsim_d
|D^sb_t|(B_{r-\varepsilon}),
\end{equation*}
so that claim~\eqref{eq:decay_claim} follows by letting $\eps\to0^+$. 

\appendix

\section{Proof of \cref{prop:main}}
\label{sec:proof_prop_main}

In this section, we prove \cref{prop:main}. The main ingredient of the argument is the following well-known reverse weak $(1,1)$-type inequality for the maximal function in~\eqref{eq:def_max_func_f}. For the proof, which uses Calderon--Zygmund decomposition, we refer to~\cite{S70}*{Chapter~1, Section~5}.

\begin{lemma}\label{lemma:CZ}
There exists a dimensional constant $C>0$ such that
\begin{equation}\label{eq:CZ}
t\,\leb^d(\set*{x\in\R^d : Mf(x)>C t})\gtrsim_d \int_{\set{f>t}} f\ d\leb^d
\end{equation}	
for all $t>0$ and all non-negative $f\in L^1(\R^d)$. More in general, there exists a dimensional constant $C>0$ such that
\begin{equation}\label{eq:CZ_local}
t\,\leb^d(\set*{x\in Q : Mf(x)>C t})\gtrsim_d \int_{\set{f>t}\cap Q} f\ d\leb^d
\end{equation}
for all cube $Q\subset\R^d$, for all $t>\dashint_Q f\ d\leb^d$ and all non-negative $f\in L^1_{\rm loc}(\R^d)$.
\end{lemma}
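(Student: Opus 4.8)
The plan is to deduce both \eqref{eq:CZ} and \eqref{eq:CZ_local} from the Calder\'on--Zygmund decomposition of $f$ at the height $t$; this is exactly where the integrability hypotheses on $f$ enter the picture.

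First I would fix $t>0$ and exploit $f\in L^1(\R^d)$ to start from a dyadic mesh of $\R^d$ so coarse that the average of $f$ over each of its cubes is at most $t$. Iterating the dyadic subdivision and stopping at a cube as soon as the average of $f$ over it exceeds $t$, one obtains a countable family $\set*{Q_j}_j$ of pairwise disjoint dyadic cubes such that $t<\dashint_{Q_j}f\,d\leb^d\le 2^d t$ for every $j$ (the upper bound because the dyadic parent of $Q_j$ was not selected) and $f\le t$ almost everywhere on $\R^d\setminus\bigcup_j Q_j$ (by the Lebesgue differentiation theorem along the dyadic basis). In particular, up to an $\leb^d$-null set, $\set*{f>t}\subseteq\bigcup_j Q_j$, so that
\begin{equation*}
\int_{\set*{f>t}}f\ d\leb^d
\le\sum_j\int_{Q_j}f\ d\leb^d
\le 2^d\,t\sum_j\leb^d(Q_j)
=2^d\,t\,\leb^d\Big(\bigcup_j Q_j\Big).
\end{equation*}

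Next I would estimate $\leb^d(\bigcup_j Q_j)$ from above by $\leb^d(\set*{\M f>Ct})$ for a suitable dimensional constant $C$. If $x\in Q_j$ and $\ell_j$ is the side length of $Q_j$, then $Q_j\subseteq B(x,\sqrt d\,\ell_j)$ and $\leb^d(B(x,\sqrt d\,\ell_j))=\omega_d\,d^{d/2}\,\leb^d(Q_j)$, whence
\begin{equation*}
\M f(x)\ge\frac{1}{\leb^d(B(x,\sqrt d\,\ell_j))}\int_{Q_j}f\ d\leb^d
>\frac{t}{\omega_d\,d^{d/2}}.
\end{equation*}
Thus, choosing $C:=(\omega_d\,d^{d/2})^{-1}$, one gets $\bigcup_j Q_j\subseteq\set*{x\in\R^d:\M f(x)>Ct}$, hence $\leb^d(\bigcup_j Q_j)\le\leb^d(\set*{\M f>Ct})$; combined with the previous display this yields $\int_{\set*{f>t}}f\,d\leb^d\le 2^d\,t\,\leb^d(\set*{\M f>Ct})$, which is \eqref{eq:CZ}.

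For the local statement \eqref{eq:CZ_local} I would run the same stopping-time argument inside $Q$ instead of $\R^d$: since by hypothesis $t>\dashint_Q f\,d\leb^d$, the cube $Q$ itself is not selected, and subdividing $Q$ dyadically while stopping at the first subcubes where the average of $f$ exceeds $t$ produces disjoint dyadic subcubes $Q_j\subseteq Q$ with $t<\dashint_{Q_j}f\,d\leb^d\le 2^d t$ and $f\le t$ almost everywhere on $Q\setminus\bigcup_j Q_j$. The two estimates above go through verbatim and give both $\int_{\set*{f>t}\cap Q}f\,d\leb^d\le 2^d\,t\,\leb^d(\bigcup_j Q_j)$ and, using $Q_j\subseteq Q$, the inclusion $\bigcup_j Q_j\subseteq\set*{x\in Q:\M f(x)>Ct}$; together they yield \eqref{eq:CZ_local} with the same constant $C$. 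The only genuinely delicate point is the mismatch between the cubes produced by the decomposition and the balls appearing in the definition of $\M$: each selected cube must be fitted into a ball centered at an arbitrary one of its points, which is responsible for the dimensional factor $\omega_d\,d^{d/2}$ in $C$; the remaining step needing a word of care is the inclusion $\set*{f>t}\subseteq\bigcup_j Q_j$ up to a null set, which is simply the Lebesgue differentiation theorem along dyadic cubes.
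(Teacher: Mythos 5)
Your proof is correct and follows exactly the route the paper indicates: the paper does not spell out a proof of this lemma but refers to Stein's book, noting that the argument "uses Calder\'on--Zygmund decomposition", which is precisely the stopping-time decomposition at height $t$ that you carry out, including the correct handling of the cube-to-ball comparison and of the local case where $t>\dashint_Q f\,d\leb^d$ guarantees the decomposition can be started from $Q$ itself.
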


We are now ready to prove \cref{prop:main}.

\begin{proof}[Proof of \cref{prop:main}]
Without loss of generality, we can assume that $\mu$ is non-negative and singular with respect to~$\leb^d$.

For all $\eps>0$ define $f_\eps\in L^1(\R^d)$ as
\begin{equation*}
f_{\eps}(x):=\frac{\mu(B(x,\eps))}{\leb^d(B(x,\eps))}=\frac{\mu(B(x,\eps))}{\omega_d\,\eps^d},
\qquad
x\in\R^d,
\end{equation*}
where $\omega_d=\leb^d(B(0,1))$.

We claim that $(f_\eps)_{\eps>0}$ satisfies the following \emph{almost semigroup property}: for all $r>0$ and $x\in\R^d$, it holds
\begin{equation}\label{eq:semigroup}
(f_\eps)_{x,r}:=\dashint_{B(x,r)} f_\eps\ dy
\lesssim_d
f_{r+\eps}(x).
\end{equation}
Indeed, fix $x\in\R^d$ and $r>0$. By Tonelli's Theorem, we can write
\begin{equation*}
(f_{\eps})_{x,r}
=\frac{1}{\omega_d \eps^d}\frac{1}{\omega_d r^d}\int_{\R^d}\leb^d\big(B(x,r)\cap B(y,\eps)\big)\ d\mu(y).
\end{equation*}
Since
\begin{equation*}
\leb^d\big(B(x,r)\cap B(y,\eps)\big)\leq\mathbf{1}_{B(x,r+\eps)}(y)\,\min\set{\omega_d\eps^d,\omega_d r^d},
\end{equation*}
for all $y\in\R^d$, we deduce that
\begin{equation*}
(f_\eps)_{x,r}
\le\frac{\mu(B(x,r+\eps))}{\omega_d(r+\eps)^d}\,
\frac{\omega_d(r+\eps)^d\min\set{\omega_d\eps^d,\omega_d r^d}}{\omega_d \eps^d\ \omega_dr^d}
\le 2^d f_{r+\eps}(x).
\end{equation*}
This concludes the proof of~\eqref{eq:semigroup}.

Thanks to~\eqref{eq:semigroup}, we easily get 
\begin{equation*}
\M f_\eps(x)\lesssim_d \M\mu(x),
\end{equation*} 
for all $x\in\R^d$. Thus, by \cref{lemma:CZ}, we conclude that
\begin{equation}\label{eq:eps_ineq}
t\leb^d(\set*{x\in\R^d : \M\mu(x)>C_dt})
\gtrsim_d 
\int_{\set{f_\eps>t}} f_\eps\ dx,
\end{equation}
for all $t>0$ and all $\eps>0$, where $C_d>0$ is a dimensional constant. 

We now claim that
\begin{equation}\label{eq:limsup_eps_ineq}
\limsup_{\eps\to0^+} \int_{\set{f_\eps>t}} f_\eps\ dx
\gtrsim_d 
\mu(\R^d),
\end{equation}
for all $t>0$, so that~\eqref{eq:main} follows immediately combining~\eqref{eq:eps_ineq} and~\eqref{eq:limsup_eps_ineq}. Indeed, by Tonelli's Theorem we have
\begin{equation}
\int_{\set{f_\eps>t}} f_\eps\ dx
=\int_{\R^d}\frac{\leb^d(\set{f_\eps>t}\cap B(x,\eps))}{\omega_d \eps^d}\ d\mu(x),
\end{equation}
for all $\eps>0$. Hence, by Fatou's Lemma, we get that 
\begin{equation*}
\limsup_{\eps\to0^+} \int_{\set{f_\eps>t}} f_\eps\ dx
\ge
\int_{\R^d}\liminf_{\eps\to0^+}\frac{\leb^d(\set{f_\eps>t}\cap B(x,\eps))}{\omega_d \eps^d}\ d\mu(x).
\end{equation*}

We now claim that
\begin{equation}\label{eq:limsup_eps_ineq_2}
\leb^d(\set{f_\eps>t}\cap B(x,\eps))
\ge 
\frac{1}{2^d}\omega_d \eps^d,
\end{equation}
for $\mu$-a.e.\ $x\in\R^d$ and all $\eps>0$. To prove~\eqref{eq:limsup_eps_ineq_2}, we need to observe two preliminary facts. 

First, notice that, given $\eps>0$ and $t>0$, we have
\begin{equation}\label{eq:inclusion}
f_{\eps/2}(x)>2^d t
\implies
B(x,\eps/2)
\subset
\set{x\in\R^d : f_\eps(x)>t}.
\end{equation}
Implication~\eqref{eq:inclusion} follows from the trivial inclusion $B(x,\eps/2)\subset B(y,\eps)$ for all $y\in B(x,\eps/2)$. 

Second, notice that
\begin{equation}\label{eq:explosion}
\lim_{\eps\to0^+} f_\eps(x)=+\infty,
\end{equation}
for $\mu$-a.e.\ $x\in\R^d$. Indeed, we have
\begin{equation}\label{eq:covering_explosion}
\set*{x\in\R^d : \liminf_{\eps\to0^+} f_\eps(x)<+\infty}\subset\bigcup_{n\in\N} A_n,
\end{equation}
where
\begin{equation*}
A_n:=\set*{x\in\R^d : \liminf_{\eps\to0^+}\frac{\mu (B(x,\eps))}{\omega_d\eps^d}\leq n}.
\end{equation*}
By a standard covering argument (for instance apply Vitali's covering Lemma, see~\cite{S70}*{Section~1.6}), one can prove that 
\begin{equation*}
\mu(E)\leq n\leb^d(E)
\quad
\text{for all Borel sets}\ E\subset A_n.
\end{equation*}
Since $\mu$ is singular with respect to~$\leb^d$, we must have that $\mu(A_n)=0$ for all $n\in\N$ and thus, by~\eqref{eq:covering_explosion}, we conclude that
\begin{equation*}
\mu\left(\set*{x\in\R^d : \liminf_{\eps\to0^+} f_\eps(x)<+\infty}\right)=0.
\end{equation*}

We can now prove~\eqref{eq:limsup_eps_ineq_2}. Fix $x\in\R^d$ such that~\eqref{eq:inclusion} holds true. Then there exists $\eps_x>0$ such that $f_{\eps/2}(x)>2^d t$ for all $\eps<\eps_x$. Hence $B(x,\eps/2)\subset \set{f_\eps>t}$ and so
\begin{equation*}
\leb^d(\set{\mu_{\eps}>t}\cap B(x,\eps))
\ge
\frac{1}{2^d}\omega_d \eps^d
\end{equation*}
for all $\eps<\eps_x$. Thus~\eqref{eq:limsup_eps_ineq_2} follows and the proof of~\eqref{eq:main} is complete. 

The proof of the local inequality~\eqref{eq:main_local} similarly follows from~\eqref{eq:CZ_local} and is left to the reader.
\end{proof}


\begin{bibdiv}
\begin{biblist}

\bib{A04}{article}{
   author={Ambrosio, Luigi},
   title={Transport equation and Cauchy problem for $BV$ vector fields},
   journal={Invent. Math.},
   volume={158},
   date={2004},
   number={2},
   pages={227--260},
}

\bib{AFP00}{book}{
   author={Ambrosio, Luigi},
   author={Fusco, Nicola},
   author={Pallara, Diego},
   title={Functions of bounded variation and free discontinuity problems},
   series={Oxford Mathematical Monographs},
   publisher={The Clarendon Press, Oxford University Press, New York},
   date={2000},
}

\bib{CDeL08}{article}{
   author={Crippa, Gianluca},
   author={De Lellis, Camillo},
   title={Estimates and regularity results for the DiPerna-Lions flow},
   journal={J. Reine Angew. Math.},
   volume={616},
   date={2008},
   pages={15--46},
}

\bib{DL89}{article}{
   author={DiPerna, R. J.},
   author={Lions, P.-L.},
   title={Ordinary differential equations, transport theory and Sobolev spaces},
   journal={Invent. Math.},
   volume={98},
   date={1989},
   number={3},
   pages={511--547},
}

\bib{E10}{book}{
   author={Evans, Lawrence C.},
   title={Partial differential equations},
   series={Graduate Studies in Mathematics},
   volume={19},
   edition={2},
   publisher={American Mathematical Society, Providence, RI},
   date={2010},
   pages={xxii+749},
}

\bib{MS13}{book}{
   author={Muscalu, Camil},
   author={Schlag, Wilhelm},
   title={Classical and multilinear harmonic analysis. Vol. I},
   series={Cambridge Studies in Advanced Mathematics},
   volume={137},
   publisher={Cambridge University Press, Cambridge},
   date={2013},
}

\bib{N18}{article}{
   author={Nguyen, Quoc Hung},
   title={Quantitative estimates for regular Lagrangian flows with $BV$ vector fields},
   status={preprint},
   eprint={https://arxiv.org/abs/1805.01182},
   date={2018}
}

\bib{J10}{article}{
  author={Jabin, Pierre-Emmanuel},
  title={Differential equations with singular fields},
  journal={J. Math. Pures Appl. (9)},
  volume= {94},
  year={2010},
  number={6},
  pages={597--621},
}

\bib{S70}{book}{
   author={Stein, Elias M.},
   title={Singular integrals and differentiability properties of functions},
   series={Princeton Mathematical Series, No. 30},
   publisher={Princeton University Press, Princeton, N.J.},
   date={1970},
   pages={xiv+290},
}

\end{biblist}
\end{bibdiv}

\end{document}